\documentclass[11pt]{article}

\usepackage{amsmath,amssymb,amsthm}
\usepackage{hyperref}
\usepackage{geometry}
\usepackage{xcolor}

\usepackage{ulem}
\usepackage{cancel}
\title{Ramification Subgroups of Knot Groups and their Profinite and Cohomological Structure}
\author{Marina Palaisti \& Federico W. Pasini}
\date{\today}

\newtheorem{theorem}{Theorem}[section]
\newtheorem{proposition}[theorem]{Proposition}
\newtheorem{lemma}[theorem]{Lemma}
\newtheorem{definition}[theorem]{Definition}
\newtheorem{corollary}[theorem]{Corollary}
\newtheorem{remark}[theorem]{Remark}

\newtheorem{hyp}{Standing Hypothesis}

\begin{document}
	\maketitle
	
	\begin{abstract}
		We formalize a ramification theory for finite covers of knot exteriors. Given a knot group $G_K$ and a finite-index subgroup $U\le G_K$, we define meridional inertia subgroups $U\cap g\langle m\rangle g^{-1}$ and the global ramification subgroup $M_U\triangleleft U$ as their normal closure. We then analyze $M_U$ from three complementary viewpoints:
		(1) finite quotients, where $U/M_U$ is shown to be the universal ``maximal meridionally unramified'' quotient of $U$;
		(2) profinite completions, where we identify the closed ramification subgroup $\widehat M_{\widehat U}$ as the closed normal subgroup generated by closed inertia and prove that meridian-preserving isomorphisms of profinite completions preserve inertia and ramification;
		(3) cohomology, where ``unramified'' $H^1$-classes (discrete and profinite) are characterized as those vanishing on all inertia subgroups, in direct analogy with number-theoretic inertia conditions in Galois cohomology.
	\end{abstract}
	
	\section{Introduction}
	
	Arithmetic topology proposes a dictionary between knots and primes, with branched coverings corresponding to ramified extensions and peripheral/meridional behaviour corresponding to inertia (cf.\ Morishita \cite{Morishita}). In this paper we isolate, for arbitrary finite covers of knot exteriors, a canonical normal subgroup generated by lifted meridians and study it as a ramification subgroup.
	
	Let $K\subset S^3$ be a tame knot, with tubular neighbourhood $\nu(K)$ and with exterior
	$
	X_K := S^3\setminus \operatorname{int}(\nu(K))$; let further $G_K := \pi_1(X_K, x_0)$ for any basepoint $x_0$, and fix a meridian $m\in G_K$.
	
	Given a finite-index subgroup $U\le G_K$, the corresponding cover $X_U\to X_K$ is a 3-manifold with boundary given by a disjoint union of tori, and each boundary component has a distinguished meridian slope (a lift of $m$). The ordinary way to annihilate those lifted meridians is exactly the group-theoretic operation that occurs under Dehn filling and under formation of branched covers in the cyclic case. In this article, we package all such meridian lifts into a single \textit{ramification subgroup} $M_U\triangleleft U$ and study its behaviour under finite quotients, profinite completions, and cohomology, in order to highlight the analogy between $M_U$ and number-theoretic inertia subgroups.
	
	Our point of view is complementary to existing arithmetic-topological and profinite-rigidity work on knot groups. The \textit{ambient classifying spaces for knot groups} \cite{PasiniClassifying, PasiniAmbClassSp} organize meridians as a distinguished family in order to build spaces reflecting ``unramified'' and ``ramified'' behaviour in an algebraic-number-theoretic sense, shows that meridian data are central in rigidity questions. The present paper keeps the setting at the level of (profinite) groups and $H^1$, and isolates a concrete subgroup $M_U$ that plays the role of an \textit{inertia-killing} subgroup for finite-index subgroups $U\le G_K$.
	
	\subsection*{Ramification dictionary}
	
	The constructions introduced in this paper are motivated by the classical analogy between number theory and knot theory:
	
	\begin{center}
		\begin{tabular}{c|c}
			Number theory & Knot theory \\[0.2em]
			\hline\\[-0.8em]
			number field $K$ & knot complement $X_K$ \\
			Galois group $G_K$ & knot group $\pi_1(X_K)$ \\
			algebraic extension $L$ over $K$ & finite connected cover $X_U\to X_K$\\
			prime ideal $\mathfrak p$ & meridian $m$ \\
			inertia subgroup $I_{\mathfrak p}$ & meridional inertia $U\cap g\langle m\rangle g^{-1}$ \\
			ramification subgroup & $M_U$ \\
			maximal unramified extension & quotient $U/M_U$
		\end{tabular}
	\end{center}
	
	This dictionary is inspired by the framework of arithmetic topology developed by Morishita \cite{Morishita}. The ramification subgroup fits into the short exact sequence
	
	\[\begin{array}{ccccccccc}
		1 & \to & M_U & \to & U & \to & U/M_U & \to & 1 \\
		& & \downarrow & & \downarrow & & \downarrow & & \\
		1 & \to & \overline{M_U} & \to & \widehat U & \to & \widehat U/\overline{M_U} & \to & 1
	\end{array}\]
	
	The lower row describes the corresponding quotient in the profinite completion. One of the main results of the paper shows that the closed ramification subgroup $\overline{M_U}$ is generated by the closures of the meridional inertia subgroups.
	
	\medskip
	\noindent\textbf{Organization.}
	Section~2 recalls peripheral structure and introduces meridional inertia and the ramification subgroup. Section~3 studies the maximal meridionally unramified quotient $U/M_U$, including its universal property, its behaviour in finite quotients, and the discrete $H^1$ characterization of unramified classes. Section~4 develops the profinite ramification subgroup via closures of inertia and ramification. Section~5 proves that meridian-preserving profinite isomorphisms preserve closed inertia and closed ramification. Section~6 gives the profinite cohomological characterization of unramified classes.
	
	\section{Peripheral structure, meridional inertia, and the ramification subgroup}
	Knots in this paper are tacitly assumed to be \textit{tame} (cf.~\cite[Section 1.A]{burzie}, \cite[Section 1.2]{crofox}). This is a natural restriction in the study of knots from the perspective of algebraic topology, but in our case it is essential, as \textit{wild} (that is, not tame) knots do not have a tubular neighbourhood.
	Fix a nontivial knot $K\subset S^3$, let $\nu(K)$ be its tubular neighbourhood, and let $X_K = S^3\setminus\operatorname{int}(\nu(K))$ be its (closed) knot exterior: then $\partial X_K$ is a torus. Fix a basepoint $x_0\in \partial X_K$, and let $G = G_K = \pi_1(X_K,x_0)$.
	
	\begin{definition}[Meridian and preferred longitude]
		A \emph{meridian} $m\in \pi_1(\partial X_K,x_0)$ is represented by a simple closed curve on $\partial X_K$ bounding an embedded disk in $\nu(K)$ that meets $K$ once. A \emph{preferred longitude} $\ell\in \pi_1(\partial X_K,x_0)$ is represented by a simple closed curve on $\partial X_K$ that is null-homologous in $X_K$ and such that $\{m,\ell\}$ is a basis for $H_1(\partial X_K)\cong \mathbb Z^2$.
	\end{definition}
	Intuitively, the meridian is [the equivalence class of] a curve that starts at the basepoint $x_0$, loops once around the knot, and returns to $x_0$ doing nothing else interesting; the longitude is [the equivalence class of] a curve that starts at the basepoint $x_0$, runs once along the knot, and returns to $x_0$ doing nothing else interesting.
	
	Let $i:\partial X_K\hookrightarrow X_K$ be inclusion.
	
	\begin{definition}[Peripheral subgroup]
		The \emph{peripheral subgroup} is
		\[ P_K := i_*\big(\pi_1(\partial X_K,x_0)\big)\le G.\]
		By abuse of notation we identify $m,\ell$ with their images in $G$. Then
		\[P_K=\langle m,\ell\rangle \cong \mathbb Z^2.\]
	\end{definition}
	
	As a well known consequence of Alexander duality, the abelianization of $G$ is
	$G^{ab}\cong H_1(X_K)\cong \mathbb Z$, generated by the meridian class. In particular, the meridian $m$ is unique up to conjugation, which guarantees all the following concepts to be well-defined and essentially independent of the choice of $m$. We can therefore make the following
	
	\begin{hyp}
		Throughout the paper, let $m\in G$ be the chosen meridian, let $U\le G$ be a finite-index subgroup, and let $p:X_U\to X_K$ be the corresponding connected cover, that is the one satisfying $\pi_1(X_U)\cong U$.
	\end{hyp}
	
	\begin{definition}[Meridional inertia subgroups]
		For $g\in G$, define the \emph{meridional inertia subgroup} inside $U$ by
		\[ I_g(U):= U\cap g\langle m\rangle g^{-1}.\]
		Note that different elements $g,g'\in G$ can give the same subgroup $U\cap g\langle m\rangle g^{-1}$ (for instance, if $g$ and $g'$ differ by an element normalizing $\langle m\rangle$), but this redundancy is harmless for what follows.
	\end{definition}
	
	\begin{definition}[Ramification subgroup]
		The \emph{ramification subgroup} of $U$ is the normal closure in $U$ of all meridional inertia:
		\[M_U := \left\langle\!\left\langle I_g(U)\ :\ g\in G \right\rangle\!\right\rangle_U.\]
		Equivalently, $M_U$ is the smallest normal subgroup of $U$ containing every $U\cap g\langle m\rangle g^{-1}$.
	\end{definition}
	\begin{remark}
		The group $M_U$ was introduced, in different but equivalent terms, in \cite{PasiniClassifying, PasiniAmbClassSp}, where it was used to show the precise relationship between the unramified covering $p:X_U\to X_K$ and the corresponding ramified covering ramified over the knot. In geometric terms, $M_U$ is the subgroup of $U$ generated by the meridians of the connected components of the boundary of $X_U$. Each such connected component is a torus in its own right, and a covering sheet in the preimage of the boundary of $X_G$ through the covering map $p$.
	\end{remark}
	
	\section{The maximal meridionally unramified quotient}
	We illustrate the behavioral similarity between $M_U$ and number-theoretic ramification groups in three ways:
	\begin{enumerate}
		\item a universal property of $U/M_U$;
		\item a concrete description of the image of $M_U$ in \emph{every} finite quotient of $U$; and
		\item a discrete cohomological characterization of meridionally unramified $H^1$-classes.
	\end{enumerate}
	
	\begin{theorem}[Universal property of $U/M_U$]\label{thm:universal}
		Let $U\le G$ and $M_U$ be as above. Let $Q$ be any group and let $\varphi:U\to Q$ be a homomorphism such that for every $g\in G$ one has $\varphi(I_g(U))=\{1\}$ (that is, $\varphi$ annihilates every meridional inertia subgroup). Then there exists a unique $\overline{\varphi}:U/M_U\to Q$ with
		\[ 	\varphi = \overline{\varphi}\circ \pi.\]
	\end{theorem}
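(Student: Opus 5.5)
The argument reduces to the universal property of quotient groups: a homomorphism factors through $U/N$ exactly when its kernel contains the normal subgroup $N$. Throughout, $\pi:U\to U/M_U$ is the canonical projection.

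The first step is to show $M_U\subseteq\ker\varphi$. By hypothesis, $\varphi(I_g(U))=\{1\}$ for every $g\in G$, so every inertia subgroup $I_g(U)=U\cap g\langle m\rangle g^{-1}$ lies in $\ker\varphi$. Since $\ker\varphi$ is a normal subgroup of $U$, and $M_U$ is by definition the smallest normal subgroup of $U$ containing all the $I_g(U)$, we get $M_U\le\ker\varphi$. Concretely, $M_U$ is generated by the conjugates $u x u^{-1}$ with $u\in U$ and $x\in I_g(U)$, and
\[
\varphi(uxu^{-1})=\varphi(u)\,\varphi(x)\,\varphi(u)^{-1}=1,
\]
so every generator of $M_U$ lies in $\ker\varphi$.

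For existence, define $\overline{\varphi}(uM_U):=\varphi(u)$. This is well defined: if $uM_U=u'M_U$, then $u^{-1}u'\in M_U\subseteq\ker\varphi$, hence $\varphi(u)=\varphi(u')$. It is a homomorphism because
\[
\overline{\varphi}(uM_U\cdot u'M_U)=\overline{\varphi}(uu'M_U)=\varphi(uu')=\varphi(u)\,\varphi(u')=\overline{\varphi}(uM_U)\,\overline{\varphi}(u'M_U).
\]
By construction $\overline{\varphi}\circ\pi=\varphi$.

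For uniqueness, note that $\pi$ is surjective. If $\psi\circ\pi=\varphi=\overline{\varphi}\circ\pi$, then $\psi$ and $\overline{\varphi}$ agree on every coset $uM_U=\pi(u)$, so $\psi=\overline{\varphi}$.

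No step is a genuine obstacle. The only point needing care is the passage from "kills each $I_g(U)$" to "kills the normal closure", which uses nothing more than the normality of $\ker\varphi$ in $U$.
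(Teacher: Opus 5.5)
Your proof is correct and follows the same route as the paper's: you show $M_U\subseteq\ker\varphi$ because $\varphi$ kills every $U$-conjugate of every inertia element, and then you factor through the quotient via $\overline{\varphi}(uM_U):=\varphi(u)$, with uniqueness coming from the surjectivity of $\pi$. You also write out the homomorphism and uniqueness checks, which the paper dismisses as immediate.
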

	
	\begin{proof}
		Let $S=\bigcup_{g\in G} I_g(U)\subseteq U$. By hypothesis, $\varphi(s)=1$ for all $s\in S$.
		Since $\varphi$ is a homomorphism, $\varphi$ annihilates every conjugate of $S$ inside $U$, hence $M_U=\langle\!\langle S\rangle\!\rangle_U\subseteq \ker(\varphi)$.
		
		By the first isomorphism theorem, any homomorphism with $M_U\subseteq\ker(\varphi)$ factors uniquely through the quotient $U/M_U$. Concretely, define $\overline{\varphi}(uM_U):=\varphi(u)$. This is well-defined because if $uM_U=u'M_U$ then $u^{-1}u'\in M_U\subseteq\ker(\varphi)$, so $\varphi(u)=\varphi(u')$. The homomorphism and uniqueness properties of $\overline\varphi$ are immediate.
	\end{proof}
	
	\begin{corollary}[Finite-quotient description of ramification]\label{cor:finitequot}
		Let $q:U\twoheadrightarrow F$ be a group epimorphism onto a finite group $F$. Then the image $q(M_U)\trianglelefteq F$ is the normal closure in $F$ of the images $q(I_g(U))$:
		\[q(M_U)=\left\langle\!\left\langle q(I_g(U))\ :\ g\in G\right\rangle\!\right\rangle_F.\]
		Equivalently, $F/q(M_U)$ is the largest quotient of $F$ in which the images of all $I_g(U)$ are trivial.
	\end{corollary}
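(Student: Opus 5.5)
The plan is to prove the displayed equality by a double inclusion. The ``equivalently'' clause will then follow from Theorem~\ref{thm:universal}, applied with $U$ replaced by $F$, or more simply from the first isomorphism theorem. Throughout, write $N:=\langle\!\langle q(I_g(U)) : g\in G\rangle\!\rangle_F$. Finiteness of $F$ plays no role; the only essential hypothesis is that $q$ is surjective.

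\emph{Inclusion $q(M_U)\subseteq N$.} Compose $q$ with the projection $F\to F/N$. The resulting homomorphism $U\to F/N$ kills every $I_g(U)$, because $q(I_g(U))\subseteq N$. By Theorem~\ref{thm:universal}, it factors through $U/M_U$, so $M_U$ lies in its kernel, which is $q^{-1}(N)$. Hence $q(M_U)\subseteq N$.

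\emph{Inclusion $N\subseteq q(M_U)$.} First, $q(M_U)$ is normal in $F$. Given $f\in F$, surjectivity lets us write $f=q(u)$ with $u\in U$, and then $f\,q(M_U)f^{-1}=q(uM_Uu^{-1})=q(M_U)$ since $M_U\trianglelefteq U$. Second, $q(M_U)$ contains every $q(I_g(U))$, since $I_g(U)\subseteq M_U$. So $q(M_U)$ is a normal subgroup of $F$ containing all the generators of $N$, and minimality of the normal closure gives $N\subseteq q(M_U)$.

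The step needing the most care is the normality of $q(M_U)$. Here surjectivity of $q$ is essential: for a non-surjective map, the image of a normal subgroup need not be normal. This is the only place the epimorphism hypothesis enters.

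For the last sentence of the statement, let $\psi:F\to Q$ be any epimorphism killing all $q(I_g(U))$. Its kernel is normal and contains the generators of $N$, so $\ker\psi\supseteq N=q(M_U)$. Therefore $\psi$ factors through $F/q(M_U)$, and this quotient itself kills all the images $q(I_g(U))$. Hence $F/q(M_U)$ is the largest such quotient.
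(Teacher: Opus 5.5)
Your proof is correct and follows essentially the paper's route. The paper invokes the fact that a surjection carries the normal closure of a set onto the normal closure of its image, and your double inclusion proves exactly that fact, with surjectivity entering only through the normality of $q(M_U)$. Two minor points: routing the easy inclusion $q(M_U)\subseteq N$ through Theorem~\ref{thm:universal} is a harmless detour, since it holds for any homomorphism; and Theorem~\ref{thm:universal} cannot literally be ``applied with $U$ replaced by $F$'', but your direct argument for the final sentence does not need it.
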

	
	\begin{proof}
		Since $q$ is surjective, every conjugate in $F$ arises from a conjugate in $U$, so the image of the normal closure of any set $S$ is the normal closure of the image of $S$: in particular,
		\[ q(M_U)= \left\langle\!\left\langle q(I_g(U))\ :\ g\in G\right\rangle\!\right\rangle_F.\]
		The final sentence follows directly from Theorem~\ref{thm:universal}.
	\end{proof}
	
	\begin{remark}
		Corollary~\ref{cor:finitequot} is the finite-group analogue of ``killing inertia'' in class field theory: to enforce that every meridional inertia subgroup becomes trivial in a finite quotient, one must quotient by the normal closure of their images.
		
		From a purely group-theoretic point of view, the finiteness condition on $F$ plays no role and can be omitted altogether. However, for the purposes of this paper, the restriction to finite epimorphic images $F$ of $U$ is natural: it mirrors the number-theoretic situation, where global class field theory describes ramification in \textit{finite} (abelian) quotients of Galois groups.
		In particular, throughout Section~3 we systematically consider finite-index subgroups of $U$ and the corresponding finite covering spaces, as well as the induced finite quotients of the maximal meridionally unramified quotient $U/M_U$. In this context, $F$ always arises as a finite quotient $F \cong U/N$ with $[U:N]<\infty$, or as a finite quotient of $U/M_U$, and it is only these finite quotients that enter the subsequent profinite and cohomological arguments.
	\end{remark}
	
	For the cohomological characterization of meridionally unramified cohomology classes, we use the identification $H^1(H;\mathbb F_p)\cong \mathrm{Hom}(H,\mathbb F_p)$, valid for any discrete group $H$.
	
	\begin{definition}[Meridionally unramified $H^1$-classes]
		Let $p$ be a prime. A class $\alpha\in H^1(U;\mathbb F_p)$ is called \emph{meridionally unramified} if its restriction to each meridional inertia subgroup is trivial:
		\[\alpha|_{U\cap g\langle m\rangle g^{-1}} = 0 \quad\text{in } H^1(U\cap g\langle m\rangle g^{-1};\mathbb F_p) \qquad\forall g\in G.\]
	\end{definition}
	
	\begin{theorem}[Cohomological characterization: discrete case]\label{thm:H1-discrete}
		For any prime $p$, the inflation map induces an isomorphism
		\begin{equation}\label{eq:infl}
			H^1(U/M_U;\mathbb F_p) \;\xrightarrow{\ \cong\ }\; \left\{\alpha\in H^1(U;\mathbb F_p)\ :\ \alpha\text{ is meridionally unramified}\right\}, \quad \beta\mapsto\beta\circ\pi.
		\end{equation}
		Equivalently, a homomorphism $U\to \mathbb F_p$ factors through $M_U$ if and only if it vanishes on every inertia subgroup $U\cap g\langle m\rangle g^{-1}$.
	\end{theorem}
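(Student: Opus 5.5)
The plan is to translate everything into homomorphisms via $H^1(H;\mathbb F_p)\cong\mathrm{Hom}(H,\mathbb F_p)$ and then apply Theorem~\ref{thm:universal} with $Q=\mathbb F_p$. Under this identification, the inflation map $H^1(U/M_U;\mathbb F_p)\to H^1(U;\mathbb F_p)$ is $\beta\mapsto\beta\circ\pi$, where $\pi:U\to U/M_U$ is the quotient map. Restriction to $I_g(U)=U\cap g\langle m\rangle g^{-1}$ is just restriction of a homomorphism to a subgroup. So a class $\alpha$ is meridionally unramified exactly when $\alpha(I_g(U))=\{0\}$ for all $g\in G$. (In the ``Equivalently'' sentence of the statement, ``factors through $M_U$'' should be read as ``factors through $U/M_U$'', i.e.\ kills $M_U$.)

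The argument has three steps.

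\textbf{Well-defined.} For $\beta\in\mathrm{Hom}(U/M_U,\mathbb F_p)$, the composite $\beta\circ\pi$ vanishes on $M_U$. Since each $I_g(U)\subseteq M_U$, it vanishes on every $I_g(U)$, so it lies in the target set.

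\textbf{Injective.} If $\beta\circ\pi=0$, then $\beta=0$ because $\pi$ is surjective. Inflation is clearly additive, so it is a group homomorphism, and the target set is a subgroup of $H^1(U;\mathbb F_p)$ since it is an intersection of kernels of restriction maps.

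\textbf{Surjective.} Let $\alpha:U\to\mathbb F_p$ vanish on all $I_g(U)$. Theorem~\ref{thm:universal} with $Q=\mathbb F_p$ and $\varphi=\alpha$ gives a unique $\overline\alpha:U/M_U\to\mathbb F_p$ with $\alpha=\overline\alpha\circ\pi$. This $\overline\alpha$ is a preimage of $\alpha$.

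The equivalent homomorphism formulation follows directly from these steps. There is essentially no obstacle here. The only point needing care is checking that the identification of $H^1$ with $\mathrm{Hom}$ (valid for trivial coefficients) is natural with respect to inflation and restriction. Both correspond to precomposition with the relevant group homomorphism, so this is routine.
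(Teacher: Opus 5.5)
Your proposal is correct and matches the paper's proof essentially step for step: well-definedness because each $I_g(U)\subseteq M_U$, injectivity from surjectivity of $\pi$, and surjectivity by showing that an unramified $\alpha$ kills $M_U$. The only difference is that you invoke Theorem~\ref{thm:universal} with $Q=\mathbb F_p$, whereas the paper re-derives by hand that $\alpha$ vanishes on all $U$-conjugates of inertia elements and hence on $M_U$; your reading of ``factors through $M_U$'' as ``factors through $U/M_U$'' is also the intended one.
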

	
	\begin{proof}
		Identify $H^1(U;\mathbb F_p)$ with $\mathrm{Hom}(U,\mathbb F_p)$. Let $\pi:U\twoheadrightarrow U/M_U$ be the quotient map.
		
		\emph{Step 1:} \emph{The image of the inflation map consists of meridionally unramified classes.}
		In fact, for any $\beta\in \mathrm{Hom}(U/M_U,\mathbb F_p)$, the inflated map $\beta\circ\pi$ vanishes on $M_U$ by construction. Since $M_U$ is the normal closure of the union of all inertia subgroups, it follows that $\beta\circ\pi$ vanishes on each inertia subgroup.
		
		\emph{Step 2:} \emph{Any meridionally unramified class factors through $U/M_U$.}
		Let $\alpha\in \mathrm{Hom}(U,\mathbb F_p)$ be meridionally unramified, so that it vanishes on every inertia subgroup. Then $\alpha$ also vanishes on the normal closure of their union: if $s$ lies in some inertia subgroup then $\alpha(s)=0$; hence for any $u\in U$,
		$\alpha(u s u^{-1})=\alpha(u)+\alpha(s)-\alpha(u)=0$,
		that is, $\alpha$ vanishes on every conjugate of every inertia element; therefore $\alpha$ vanishes on the subgroup generated by those conjugates, i.e.\ on $M_U$. Thus $M_U\subseteq\ker(\alpha)$, so $\alpha$ factors through $U/M_U$: there exists $\beta\in \mathrm{Hom}(U/M_U,\mathbb F_p)$ such that $\alpha=\beta\circ\pi$.
		
		\emph{Step 3:} \emph{Injectivity.}
		Step 1 and Step 2 show the map in \eqref{eq:infl} is onto the unramified subset. Injectivity is immediate: if $\beta\circ\pi=0$ then $\beta=0$ because $\pi$ is surjective.
	\end{proof}
	
	\section{Profinite ramification}
	
	Let $\widehat{G}$ be the profinite completion of $G$ and likewise $\widehat{U}$ for $U$. Being the knot group of a tame knot, $G$ is finitely generated. Since $G$ is dense in $\widehat{G}$, the latter is topologically finitely generated. Then, by Nikolov-Segal's Theorem \cite{nikseg}, every finite-index subgroup of $\widehat{G}$ is open. In particular, since $U$ has finite index in $G$, $\widehat U$ identifies with the closure of $U$ in $\widehat G$ and is an \emph{open} subgroup of $\widehat G$. We write $\overline{H}^{\,\widehat U}$ for the closure of a subset $H\subseteq U$ inside $\widehat U$, and simply $\overline{H}$ for closure inside $\widehat{G}$ (these agree when $H\subseteq U$, since $\widehat U$ is open in $\widehat G$).

	\begin{lemma}[Closures of intersections with open subgroups]\label{lem:closure-intersection}
		Let $U\le G$ have finite index, and let $A\le G$ be any subgroup. Then inside $\widehat G$ one has
		\[ \overline{U\cap A} = \widehat U\cap \overline{A}.\]
	\end{lemma}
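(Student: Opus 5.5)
Both inclusions can be proved directly in $\widehat G$. Throughout, $G$ is viewed as a dense subgroup of $\widehat G$; this is harmless because knot groups are residually finite, so $G\to\widehat G$ is injective. As noted just before the statement, $\widehat U=\overline U$ is open in $\widehat G$.

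\emph{The inclusion $\subseteq$.} Since $U\cap A\subseteq U\subseteq \widehat U$ and $\widehat U$ is closed, we get $\overline{U\cap A}\subseteq\widehat U$. Likewise $U\cap A\subseteq A$ gives $\overline{U\cap A}\subseteq\overline A$. Intersecting the two yields $\overline{U\cap A}\subseteq\widehat U\cap\overline A$.

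\emph{The inclusion $\supseteq$.} This is the only step with content, and the key input is that $\widehat U$ is open. Take $x\in\widehat U\cap\overline A$ and an arbitrary open neighbourhood $W$ of $x$ in $\widehat G$. The set $W\cap\widehat U$ is again an open neighbourhood of $x$. Since $x\in\overline A$, this neighbourhood meets $A$, so there is some $a\in A\cap W\cap\widehat U$.

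It remains to see that such an $a$ lies in $U$, i.e.\ that $G\cap\widehat U=U$. Let $N\trianglelefteq G$ be the normal core of $U$, which has finite index in $G$. Its closure $\overline N$ is open and normal in $\widehat G$, and $\widehat G/\overline N\cong G/N$. Under this identification $\overline U=U\overline N$ corresponds to $U/N$. Hence for $g\in G$, having $g\in\overline U$ means $gN\in U/N$, which gives $g\in U$. Therefore $a\in U\cap A$.

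So every open neighbourhood of $x$ meets $U\cap A$, which means $x\in\overline{U\cap A}$. This proves the reverse inclusion and hence the equality.

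The only point needing care is the identity $G\cap\overline U=U$, which relies on $U$ having finite index. The argument above handles it via the normal core. I would include it explicitly, since the lemma fails for closures of arbitrary (non-open) subgroups.
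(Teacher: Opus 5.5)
Your proof is correct and follows the paper's argument essentially verbatim. The easy inclusion comes from $\widehat U$ and $\overline A$ being closed, and the reverse inclusion comes from intersecting a neighbourhood $W$ of $x$ with the open subgroup $\widehat U$ and using $G\cap\widehat U=U$; the one difference is that you prove $G\cap\widehat U=U$ via the normal core $N$ and $\widehat G/\overline N\cong G/N$, a step the paper simply asserts.
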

	
	\begin{proof}
		We view $G$ as a dense subgroup of $\widehat G$. Because $U$ has finite index in $G$, its closure $\widehat U$ is an open-and-closed subgroup of $\widehat G$, and $G\cap\widehat U=U$.
		
		The inclusion $\overline{U\cap A}\subseteq\widehat U\cap\overline A$ is immediate, because the right-hand side is a closed set including both $U$ and $A$. For the reverse inclusion, let $x\in\widehat U\cap\overline A$ and let $W$ be any open neighborhood of $x$ in $\widehat G$. Then $W\cap\widehat U$ is open in $\widehat G$ since $\widehat U$ is open. Since $x\in\overline A$, the set $W\cap\widehat U$ meets $A$; choose $a\in A\cap W\cap\widehat U$. Now $a\in \widehat U\cap G=U$, so $a\in U\cap A\cap W$. Thus every neighborhood of $x$ meets $U\cap A$, so $x\in\overline{U\cap A}$.
		
		This is also the standard closure-intersection identity for open subspaces, applied in the profinite topology; compare Ribes--Zalesskii \cite{RibesZalesskii}.
	\end{proof}
	
	\begin{proposition}[Closure of inertia subgroups]\label{thm:closure-inertia}
		For each $g\in G$ one has
		\[ \overline{\,U\cap g\langle m\rangle g^{-1}\,} = \widehat U \cap \overline{\,g\langle m\rangle g^{-1}\,} \subseteq \widehat G.\]
	\end{proposition}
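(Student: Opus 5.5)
This should follow directly from Lemma~\ref{lem:closure-intersection} once we pick the right subgroup. For a fixed $g\in G$ we take $A := g\langle m\rangle g^{-1}$. This is a subgroup of $G$, namely the cyclic subgroup generated by the conjugate meridian $gmg^{-1}$. Since $U$ has finite index in $G$, both hypotheses of Lemma~\ref{lem:closure-intersection} hold. The lemma then gives, inside $\widehat G$,
\[
\overline{U\cap g\langle m\rangle g^{-1}} = \widehat U\cap \overline{g\langle m\rangle g^{-1}},
\]
which is exactly the claimed identity.

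The only bookkeeping concerns where the closure on the left is taken. By the convention fixed at the start of this section, $\overline{H}$ denotes closure in $\widehat G$. For subsets of $U$ this agrees with $\overline{H}^{\,\widehat U}$, because $\widehat U$ is open, hence also closed, in $\widehat G$. So the left-hand side can be read in either $\widehat G$ or $\widehat U$ without changing its meaning.

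We might also record that $\overline{g\langle m\rangle g^{-1}} = g\,\overline{\langle m\rangle}\,g^{-1}$, since conjugation by $g$ is a homeomorphism of $\widehat G$. This rewrites the right-hand side as $\widehat U\cap g\overline{\langle m\rangle}g^{-1}$, but it is not needed for the stated equality.

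There is no real obstacle here. The substantive point, that $\widehat U$ is open and $G\cap\widehat U=U$, was already handled in Lemma~\ref{lem:closure-intersection} and in the preceding discussion via Nikolov--Segal.
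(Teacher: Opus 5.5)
Your proposal is correct and is exactly the paper's argument: the paper's proof consists solely of applying Lemma~\ref{lem:closure-intersection} with $A=g\langle m\rangle g^{-1}$. Your side remarks are also correct but not needed for the equality. These are that closure in $\widehat G$ agrees with closure in $\widehat U$ for subsets of $U$ since $\widehat U$ is closed, and that $\overline{g\langle m\rangle g^{-1}}=g\,\overline{\langle m\rangle}\,g^{-1}$.
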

	
	\begin{proof}
		Apply Lemma~\ref{lem:closure-intersection} with $A=g\langle m\rangle g^{-1}$.
	\end{proof}
	
	\begin{proposition}[Closure of ramification subgroup]\label{thm:closure-ramification}
		Let $M_U$ be the ramification subgroup of $U$. Then its closure in $\widehat U$ is the \emph{closed normal subgroup} generated by the closures of inertia:
		\[ \overline{M_U}^{\,\widehat U} = \overline{\left\langle\!\left\langle U\cap g\langle m\rangle g^{-1}:\ g\in G \right\rangle\!\right\rangle_U}^{\,\widehat U}
		=\left\langle\!\left\langle \overline{U\cap g\langle m\rangle g^{-1}}^{\,\widehat U}:\ g\in G \right\rangle\!\right\rangle_{\widehat U}^{\mathrm{cl}},\]
		where the superscript $\mathrm{cl}$ means ``take the closure'' after forming the normal subgroup.
	\end{proposition}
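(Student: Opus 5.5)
The plan is to prove the two inclusions between $\overline{M_U}^{\,\widehat U}$ and
\[N:=\left\langle\!\left\langle \overline{I_g(U)}^{\,\widehat U}:\ g\in G \right\rangle\!\right\rangle_{\widehat U}^{\mathrm{cl}},\]
using only that $U$ is dense in $\widehat U$ and that multiplication, inversion and conjugation in $\widehat U$ are continuous. The first equality in the statement is just the definition of $M_U$, so only the second one needs an argument. Note also that $N$ is a closed normal subgroup: the closure of a normal subgroup of a topological group is again normal, since conjugation is a homeomorphism.

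\emph{Inclusion $\overline{M_U}^{\,\widehat U}\subseteq N$.} Each $I_g(U)$ is contained in its closure $\overline{I_g(U)}^{\,\widehat U}\subseteq N$. The intersection $N\cap U$ is a normal subgroup of $U$, because $N$ is normalized by all of $\widehat U\supseteq U$. Hence $N\cap U$ contains every $I_g(U)$, and so it contains $M_U$ by minimality of the normal closure. Since $N$ is closed, taking closures gives $\overline{M_U}^{\,\widehat U}\subseteq N$.

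\emph{Inclusion $N\subseteq\overline{M_U}^{\,\widehat U}$.} Set $C:=\overline{M_U}^{\,\widehat U}$. The key step, and the only one needing care, is to show that $C$ is normal in $\widehat U$ and not merely normalized by $U$.
\begin{enumerate}
\item $C$ is a subgroup, being the closure of a subgroup.
\item For fixed $c\in C$, the set $\{x\in\widehat U: xcx^{-1}\in C\}$ is closed, as the preimage of the closed set $C$ under a continuous map.
\item This set contains $U$. Indeed, for $u\in U$ conjugation by $u$ is a homeomorphism preserving $M_U$, so it preserves $C$.
\item By density of $U$ in $\widehat U$, the set is all of $\widehat U$, so $C$ is normal in $\widehat U$.
\end{enumerate}
Next, $C$ is closed and contains $I_g(U)$, so it contains $\overline{I_g(U)}^{\,\widehat U}$ for every $g$. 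Being a closed normal subgroup containing all these closures, $C$ contains the normal subgroup they generate, and hence also its closure $N$.

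Combining the two inclusions gives the equality in the statement. The hard direction is the normality of $C$ under conjugation by genuinely profinite elements, which is handled by the density argument above. Nothing knot-specific is used. If desired, Proposition~\ref{thm:closure-inertia} lets one rewrite the generators as $\widehat U\cap\overline{g\langle m\rangle g^{-1}}$; for this, recall that closures in $\widehat U$ and in $\widehat G$ agree because $\widehat U$ is open, and hence closed, in $\widehat G$.
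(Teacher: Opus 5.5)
Your proof is correct and follows the same two-inclusion argument as the paper. You also justify two points the paper only asserts. First, $N\cap U$ is normal in $U$, so $M_U$, being the normal closure of the $I_g(U)$, lies in $N$. Second, $\overline{M_U}^{\,\widehat U}$ is normal in all of $\widehat U$, which you derive from the density of $U$; the paper simply states it.
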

	
	\begin{proof}
		
		Let provisionally $C = \left\langle\!\left\langle \overline{U\cap g\langle m\rangle g^{-1}}^{\,\widehat U}:\ g\in G \right\rangle\!\right\rangle_{\widehat U}^{\mathrm{cl}}$. For the inclusion $\overline{M_U}^{\,\widehat U} \subseteq C$, it suffices to observe that, for any $g\in G$,
		$I_g(U) \subseteq \overline{I_g(U)}\subseteq C$; therefore $M_U$, being generated by the $I_g(U)$, is included in $C$; whence $\overline{M_U}\subseteq C$ because $C$ is closed.
		For the reverse inclusion, for any $g\in G$, $I_g(U)\subseteq M_U$, hence $\overline{I_g(U)}\subseteq \overline{M_U}$, hence $C\subseteq\overline{M_U}$ because the latter is a normal subgroup.\end{proof}

	Since $\langle m\rangle\cong \mathbb Z$, its profinite completion is $\widehat{\mathbb Z}$, and the closure $\overline{\langle m\rangle}^{\,\widehat G}$ is a procyclic subgroup of $\widehat{G}$ isomorphic to $\widehat{\mathbb Z}$.
	
	\begin{definition}[Profinite meridian subgroup]
		A \emph{profinite meridian subgroup} of $\widehat G$ means a subgroup of the form $\overline{g\langle m\rangle g^{-1}}^{\,\widehat G}$ for some $g\in G$ (and hence all its conjugates inside $\widehat G$).
	\end{definition}
	
	It is convenient to define the corresponding \emph{closed} inertia and ramification directly in profinite terms.
	
	\begin{definition}[Closed meridional inertia and closed ramification]
		For $x\in \widehat G$, define the \emph{closed inertia subgroup}
		\[\widehat I_x(\widehat U):=\widehat U\cap x\,\overline{\langle m\rangle}^{\,\widehat G}\,x^{-1}.\]
		Further, define the \emph{closed ramification subgroup} of $\widehat U$ as the smallest \emph{closed} normal subgroup of $\widehat{U}$ containing all $\widehat I_x(\widehat U)$:
		\[\widehat M_{\widehat U}:= \left\langle\!\left\langle \widehat I_x(\widehat U):x\in \widehat G \right\rangle\!\right\rangle_{\widehat U}^{\mathrm{cl}}.\]
	\end{definition}
	
	\begin{lemma}[Agreement with closure of the discrete ramification subgroup]\label{lem:profinite-agrees}
		Let $U\le G$ have finite index. Then \[\widehat M_{\widehat U}=\overline{M_U}^{\,\widehat U}.\]
	\end{lemma}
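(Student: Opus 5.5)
We prove the two inclusions separately. The inclusion $\overline{M_U}^{\,\widehat U}\subseteq \widehat M_{\widehat U}$ is the easy one. For $g\in G\subseteq\widehat G$, Proposition~\ref{thm:closure-inertia} gives
\[
I_g(U)\subseteq \widehat U\cap \overline{g\langle m\rangle g^{-1}} = \widehat U\cap g\,\overline{\langle m\rangle}\,g^{-1}=\widehat I_g(\widehat U).
\]
Here we use that conjugation by $g$ is a homeomorphism of $\widehat G$, so it commutes with closure. Hence every $I_g(U)$ lies in the closed normal subgroup $\widehat M_{\widehat U}$. Since $M_U$ is the normal closure in $U$ and $U\subseteq\widehat U$, we get $M_U\subseteq \widehat M_{\widehat U}$. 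Taking closures then gives the inclusion.

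For the reverse inclusion, $\overline{M_U}^{\,\widehat U}$ is a closed normal subgroup of $\widehat U$. It is normal because it is the closure of a subgroup normalized by the dense subgroup $U$, and normalizing a closed set passes to limits. So it suffices to show $\widehat I_x(\widehat U)\subseteq \overline{M_U}$ for every $x\in\widehat G$. This is the main obstacle, since $x$ ranges over all of $\widehat G$ and not just over $G$.

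The plan is to reduce to $x\in G$ using the openness of $\widehat U$. Write $x = g y$ with $g\in G$ and $y$ in a suitable open normal subgroup; I will choose $N=\bigcap_{h\in\widehat G} h\widehat U h^{-1}$, which is open and normal in $\widehat G$ and contained in $\widehat U$. This is possible because $G$ is dense, so $G$ meets the coset $xN$. Then
\[
x\,\overline{\langle m\rangle}\,x^{-1}=g\,\bigl(y\,\overline{\langle m\rangle}\,y^{-1}\bigr)\,g^{-1}.
\]
Take $z\in \widehat U\cap x\overline{\langle m\rangle}x^{-1}$, so $z=x\mu x^{-1}$ with $\mu\in\overline{\langle m\rangle}$. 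Writing $x=gy$ with $y\in N$, we have
\[
z = g\,y\mu y^{-1}\,g^{-1} = (gyg^{-1})\,(g\mu g^{-1})\,(gyg^{-1})^{-1},
\]
and $gyg^{-1}\in N\subseteq\widehat U$. Also $g\mu g^{-1}=(gyg^{-1})^{-1} z\,(gyg^{-1})$ lies in $\widehat U$, because both $z$ and $gyg^{-1}$ do. Therefore $g\mu g^{-1}\in \widehat U\cap \overline{g\langle m\rangle g^{-1}}=\overline{I_g(U)}$ by Proposition~\ref{thm:closure-inertia}. It follows that $z$ is a $\widehat U$-conjugate of an element of $\overline{I_g(U)}\subseteq \overline{M_U}$. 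Since $\overline{M_U}$ is normal in $\widehat U$, we get $z\in\overline{M_U}$.

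This gives $\widehat I_x(\widehat U)\subseteq\overline{M_U}$ for all $x$, and so $\widehat M_{\widehat U}\subseteq \overline{M_U}^{\,\widehat U}$. The only point to double-check is that $\overline{M_U}$ is indeed normal in $\widehat U$, via the density and continuity argument above. Alternatively, one can combine Proposition~\ref{thm:closure-ramification} with the computation above.
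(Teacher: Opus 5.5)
Your proof is correct. Your easy inclusion is essentially the paper's; the two arguments differ in the reverse inclusion.

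The paper handles an arbitrary $x\in\widehat G$ by approximation. It takes a net $g_i\to x$ with $g_i\in G$ and uses continuity of conjugation to write each element $xhx^{-1}$ of $\widehat I_x(\widehat U)$ as a limit of the elements $g_ihg_i^{-1}$. These eventually lie in $\widehat U\cap g_i\overline{\langle m\rangle}g_i^{-1}=\overline{I_{g_i}(U)}\subseteq\overline{M_U}^{\,\widehat U}$, and the paper concludes by closedness. This tacitly uses two facts it does not spell out: that $\widehat U$ is open, so the approximants eventually enter $\widehat U$, and that $\overline{M_U}^{\,\widehat U}$ is normal in $\widehat U$.

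You instead give an exact algebraic reduction. Writing $x=gy$ with $y$ in the open normal core $N\subseteq\widehat U$ gives $\widehat I_x(\widehat U)=c\,\widehat I_g(\widehat U)\,c^{-1}$ with $c=gyg^{-1}\in\widehat U$ and $g\in G$. So every closed inertia subgroup indexed by $\widehat G$ is literally a $\widehat U$-conjugate of one indexed by $G$, namely $\overline{I_g(U)}$. This buys you three things:
\begin{itemize}
\item it avoids nets and limits entirely;
\item it makes explicit exactly where openness of $\widehat U$ and normality of $\overline{M_U}^{\,\widehat U}$ enter (you also justify the latter, which the paper does not);
\item it proves a slightly stronger statement than the lemma needs: the two families of inertia subgroups agree up to $\widehat U$-conjugacy.
\end{itemize}

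One simplification: the normal core is not needed. Since $\widehat U x$ is open and $G$ is dense, you can write $x=ug$ with $u\in\widehat U$ and $g\in G$ directly. Then $\widehat I_x(\widehat U)=u\,\widehat I_g(\widehat U)\,u^{-1}$, and your $c$ is precisely such a $u$, since $x=(gyg^{-1})g$.
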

	
	\begin{proof}
		We prove mutual inclusion:
		\begin{enumerate}
			\item $\overline{M_U}^{\,\widehat U}\subseteq \widehat M_{\widehat U}$. 
			
			Pick an arbitrary $g\in G$. Then $U\cap g\langle m\rangle g^{-1}\subseteq \widehat{U}\cap g\overline{\langle m\rangle}^{\widehat{G}} g^{-1}$, which is closed because conjugation is continuous. Therefore $\overline{U\cap g\langle m\rangle g^{-1}}^{\widehat{U}}\subseteq \widehat{U}\cap g\overline{\langle m\rangle}^{\widehat{G}} g^{-1}$. Using Proposition \ref{thm:closure-ramification}, all the generators of $\overline{M_U}^{\widehat{U}}$ as a closed normal subgroup are also generators of $\widehat M_{\widehat U}$ in the same sense, so $\overline{M_U}^{\,\widehat U}\subseteq \widehat M_{\widehat U}$.

			\item $\widehat M_{\widehat U}\subseteq \overline{M_U}^{\,\widehat U}$. Fix $x\in \widehat G$. Because $G$ is dense in $\widehat G$, there exists a net $\{g_i\}_i\subseteq G$ converging to $x$ in $\widehat G$. By continuity of the conjugation map $(x,y)\mapsto xyx^{-1}$ in both arguments, for any $h\in \overline{\langle m\rangle}^{\,\widehat G}$, we have $g_i h g_i^{-1}\to x h x^{-1}$ and, if $\{h_j\}_j\subseteq{\langle m\rangle}$ is a net converging to $h$, $g_ih_jg_i^{-1}\to g_ih g_i^{-1}$. Thus every element of \(\widehat I_x(\widehat U)=\widehat U\cap x\,\overline{\langle m\rangle}^{\,\widehat G}x^{-1}\)
			is a limit of elements lying in the subgroups $\widehat U\cap g_i\,{\langle m\rangle}g_i^{-1}$.
			
			For each $i$ we have
			\[\widehat U\cap g_i\,\overline{\langle m\rangle}^{\,\widehat G}g_i^{-1}\subseteq \overline{U\cap g_i\langle m\rangle g_i^{-1}}^{\,\widehat U}\subseteq \overline{M_U}^{\,\widehat U}.\]
			Therefore any limit of elements from these subgroups also lies in $\overline{M_U}^{\,\widehat U}$, proving
			\({\widehat I_x(\widehat U)\subseteq \overline{M_U}^{\,\widehat U},}\)
			namely that all the generators of $\widehat M_{\widehat U}$ as a closed normal subgroup are also in $\overline{M_U}^{\widehat{U}}$. Thus $\widehat M_{\widehat U}\subseteq \overline{M_U}^{\,\widehat U}$.
		\end{enumerate}
	\end{proof}
	
	\section{Meridian-preserving profinite isomorphisms}
	
	In this section we prove a profinite-structure theorem of the following type: if one has an isomorphism between profinite completions of knot groups that \emph{respects the meridian (up to conjugacy)}, then it preserves all closed meridional inertia subgroups and hence the closed ramification subgroup.
	
	\begin{remark}
		In general, identifying ``the meridian'' intrinsically inside $\widehat G$ is subtle: many procyclic subgroups exist in profinite groups. Therefore our main theorem assumes that a given profinite isomorphism sends the conjugacy class of the (closure of the) meridian subgroup of one knot group to that of the other. This is the natural profinite analogue of ``preserving inertia'' in Galois theory.
	\end{remark}
	
	Now let $K'\subset S^3$ be another knot with group $G'=G_{K'}$ and meridian $m'\in G'$. Write $\widehat G'$ for the profinite completion.
	
	\begin{theorem}[Meridian-preserving profinite isomorphisms preserve closed inertia]\label{thm:profinite-inertia-rigidity}
		Assume there is a topological group isomorphism
		\[\Phi:\widehat G \xrightarrow{\ \cong\ } \widehat G',\] such that $\Phi$ sends the conjugacy class of the profinite meridian subgroup $\overline{\langle m\rangle}^{\,\widehat G}$ to the conjugacy class of $\overline{\langle m'\rangle}^{\,\widehat G'}$; equivalently, there exists $y\in \widehat G'$ with
		\[ \Phi\big(\overline{\langle m\rangle}^{\,\widehat G}\big)= y\,\overline{\langle m'\rangle}^{\,\widehat G'}\,y^{-1}.\]
		Let $\widehat U\le \widehat G$ be any open subgroup and set $\widehat U':=\Phi(\widehat U)$.
		Then for every $x\in \widehat G$ one has
		\[ \Phi\big(\widehat I_x(\widehat U)\big)=\widehat I_{\Phi(x)y}(\widehat U')\]
		(where inertia on the right is computed using $m'$). In particular, $\Phi$ induces a bijection between the families of closed inertia subgroups of $\widehat U$ and of $\widehat U'$.
	\end{theorem}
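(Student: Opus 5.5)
This is a direct computation from the definitions. Three facts do the work: $\Phi$ is a bijective group homomorphism, it preserves intersections, and it converts conjugation in $\widehat G$ into conjugation in $\widehat G'$.

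First, write $\widehat M:=\overline{\langle m\rangle}^{\,\widehat G}$ and $\widehat M':=\overline{\langle m'\rangle}^{\,\widehat G'}$, so that the hypothesis reads $\Phi(\widehat M)=y\widehat M'y^{-1}$. For $x\in\widehat G$, injectivity of $\Phi$ gives
\[
\Phi\big(\widehat U\cap x\widehat M x^{-1}\big)=\Phi(\widehat U)\cap\Phi\big(x\widehat Mx^{-1}\big).
\]
Since $\Phi$ is a homomorphism,
\[
\Phi\big(x\widehat Mx^{-1}\big)=\Phi(x)\Phi(\widehat M)\Phi(x)^{-1}.
\]
Substituting the hypothesis, this equals $(\Phi(x)y)\,\widehat M'\,(\Phi(x)y)^{-1}$. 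Hence
\[
\Phi\big(\widehat I_x(\widehat U)\big)=\widehat U'\cap(\Phi(x)y)\,\widehat M'\,(\Phi(x)y)^{-1}=\widehat I_{\Phi(x)y}(\widehat U').
\]
Here $\widehat I$ is defined for arbitrary $x\in\widehat G$, not only for $x\in G$. Also, $\widehat U'$ is open in $\widehat G'$, because $\Phi$ is a homeomorphism. So the right-hand side is a legitimate closed inertia subgroup of $\widehat U'$ in the sense of the definition, with $\widehat U'$ in place of $\widehat U$ and $m'$ in place of $m$.

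For the bijection, the map $x\mapsto\Phi(x)y$ is a bijection $\widehat G\to\widehat G'$, with inverse $z\mapsto\Phi^{-1}(zy^{-1})$. So every closed inertia subgroup $\widehat I_z(\widehat U')$ with $z\in\widehat G'$ arises as $\Phi(\widehat I_x(\widehat U))$ for $x=\Phi^{-1}(zy^{-1})$. Thus $\Phi$ maps the family $\{\widehat I_x(\widehat U)\}$ onto the family $\{\widehat I_z(\widehat U')\}$. It is injective on subgroups because $\Phi$ is injective on elements.

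There is no real obstacle. The only care needed is that the indexing sets are all of $\widehat G$ and $\widehat G'$, not the dense subgroups $G$ and $G'$. If one indexed by $g\in G$ only, $\Phi(g)y$ need not lie in $G'$, and the argument would have to go through Lemma~\ref{lem:profinite-agrees} instead. With the definition as stated, the computation closes directly.
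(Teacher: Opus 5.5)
Your proposal is correct and follows essentially the same route as the paper: apply $\Phi$ to $\widehat U\cap x\,\overline{\langle m\rangle}\,x^{-1}$, use that $\Phi$ preserves intersections and conjugation, substitute the hypothesis to obtain $\widehat I_{\Phi(x)y}(\widehat U')$, and deduce the bijection of families from the bijectivity of $x\mapsto\Phi(x)y$. You are slightly more explicit than the paper, since you give the inverse $z\mapsto\Phi^{-1}(zy^{-1})$, check that $\widehat U'$ is open, and justify injectivity on subgroups.
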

	
	\begin{proof}
		Fix an open subgroup $\widehat U\le \widehat G$, let $\widehat U'=\Phi(\widehat U)$, and fix $x\in \widehat G$. By definition,
		\[ \widehat I_x(\widehat U)=\widehat U\cap x\,\overline{\langle m\rangle}\,x^{-1}.\]
		Apply $\Phi$ to both sides. Because $\Phi$ is an isomorphism of topological groups, it preserves intersections and conjugation:
		\[ \Phi(A\cap B)=\Phi(A)\cap \Phi(B),\qquad \Phi(xAx^{-1})=\Phi(x)\Phi(A)\Phi(x)^{-1}.\]
		Hence
		\begin{align*}
			\Phi\big(\widehat I_x(\widehat U)\big) &= \Phi(\widehat U)\cap \Phi\big(x\,\overline{\langle m\rangle}\,x^{-1}\big) \\
			&= \widehat U'\cap \Phi(x)\,\Phi\big(\overline{\langle m\rangle}\big)\,\Phi(x)^{-1}.
		\end{align*}
		
		By hypothesis, $\Phi(\overline{\langle m\rangle})=y\,\overline{\langle m'\rangle}\,y^{-1}$ for some $y\in \widehat G'$. (Here $y$ is determined only up to right--multiplication by an element of $\overline{\langle m'\rangle}$, but this ambiguity does not affect the inertia subgroups.) Substituting in the previous equation:
		\begin{align*}
			\Phi\big(\widehat I_x(\widehat U)\big) &= \widehat U'\cap \Phi(x)\,y\,\overline{\langle m'\rangle}\,y^{-1}\,\Phi(x)^{-1} \\
			&= \widehat U'\cap (\Phi(x)y)\,\overline{\langle m'\rangle}\,(\Phi(x)y)^{-1},
		\end{align*}
		which is exactly $\widehat I_{\Phi(x)y}(\widehat U')$ by definition (with meridian $m'$). This proves the claimed equality.
		
		Finally, since $x$ ranges over all of $\widehat G$, so does $\Phi(x)y$ range over all of $\widehat G'$, hence the correspondence is bijective at the level of families.
	\end{proof}
	
	\begin{corollary}[Meridian-preserving profinite isomorphisms preserve closed ramification]\label{thm:profinite-ramification-rigidity}
	Under the hypotheses of Theorem~\ref{thm:profinite-inertia-rigidity}, for every open subgroup $\widehat U\le \widehat G$ with $\widehat U'=\Phi(\widehat U)$ one has
		\[ \Phi\big(\widehat M_{\widehat U}\big)=\widehat M_{\widehat U'}.\]
		Moreover, $\Phi$ induces a canonical isomorphism of quotients
		\[ \widehat U/\widehat M_{\widehat U}\ \cong\ \widehat U'/\widehat M_{\widehat U'}.\]
	\end{corollary}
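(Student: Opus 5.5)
The plan is to derive the corollary directly from Theorem~\ref{thm:profinite-inertia-rigidity}, together with the fact that a topological group isomorphism carries closed normal subgroups generated by a family of subsets to the closed normal subgroup generated by the image family. Fix an open $\widehat U\le\widehat G$ and set $\widehat U'=\Phi(\widehat U)$. Since $\Phi$ is a homeomorphism and a group isomorphism, $\widehat U'$ is open in $\widehat G'$. The restriction $\Phi|_{\widehat U}:\widehat U\to\widehat U'$ is again a topological group isomorphism.

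\textbf{Step 1: images of closed normal closures.} For any family $\mathcal S$ of subsets of $\widehat U$,
\[
\Phi\Big(\langle\!\langle \mathcal S\rangle\!\rangle^{\mathrm{cl}}_{\widehat U}\Big)=\langle\!\langle \Phi(\mathcal S)\rangle\!\rangle^{\mathrm{cl}}_{\widehat U'} .
\]
The argument runs as follows. Because $\Phi$ is a group isomorphism $\widehat U\to\widehat U'$, it maps the abstract normal closure of $\bigcup\mathcal S$ in $\widehat U$ onto the abstract normal closure of $\bigcup\Phi(\mathcal S)$ in $\widehat U'$; conjugates go to conjugates and products go to products. Because $\Phi$ is a homeomorphism, it commutes with taking closures, $\Phi(\overline{X})=\overline{\Phi(X)}$. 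Alternatively, one can argue by minimality. $\Phi$ sends closed normal subgroups of $\widehat U$ containing $\mathcal S$ bijectively to closed normal subgroups of $\widehat U'$ containing $\Phi(\mathcal S)$, with $\Phi^{-1}$ as inverse, so the smallest such subgroups correspond.

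\textbf{Step 2: apply to inertia.} Take $\mathcal S=\{\widehat I_x(\widehat U):x\in\widehat G\}$. By Theorem~\ref{thm:profinite-inertia-rigidity},
\[
\Phi(\mathcal S)=\{\widehat I_{\Phi(x)y}(\widehat U'):x\in\widehat G\}.
\]
As $x$ runs over $\widehat G$, the element $\Phi(x)y$ runs over all of $\widehat G'$, since $\Phi$ is surjective and right multiplication by $y$ is a bijection. Hence $\Phi(\mathcal S)$ is exactly the family of all closed inertia subgroups of $\widehat U'$ computed with $m'$. Step~1 then gives $\Phi(\widehat M_{\widehat U})=\widehat M_{\widehat U'}$.

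\textbf{Step 3: the quotients.} Consider the composite $\widehat U\xrightarrow{\Phi}\widehat U'\to\widehat U'/\widehat M_{\widehat U'}$. It is a continuous surjection whose kernel is $\Phi^{-1}(\widehat M_{\widehat U'})=\widehat M_{\widehat U}$. It therefore induces a continuous bijective homomorphism
\[
\widehat U/\widehat M_{\widehat U}\to\widehat U'/\widehat M_{\widehat U'}.
\]
Its inverse is induced in the same way by $\Phi^{-1}$, so it is an isomorphism of profinite groups. Alternatively, this follows from compactness plus the Hausdorff property. The map is canonical: it depends only on $\Phi$ and not on the choice of $y$, because $y$ enters only through the indexing of the family of inertia subgroups, not through the subgroup $\widehat M$ itself.

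The main obstacle is only the bookkeeping in Step~1, namely that closure commutes with $\Phi$. The minimality formulation disposes of this cleanly, so no genuine difficulty is expected.
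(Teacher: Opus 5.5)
Your proposal is correct and follows essentially the same approach as the paper. Both arguments transport the closed inertia subgroups via Theorem~\ref{thm:profinite-inertia-rigidity}, use minimality of the closed normal closure, and then pass to quotients; the paper proves the two inclusions separately, getting the reverse one by applying the theorem to $\Phi^{-1}$, while you package both into a single bijection of closed normal subgroups, and your explicit observation that $x\mapsto\Phi(x)y$ is onto $\widehat G'$ is exactly what the paper uses implicitly when it asserts that $\Phi(\widehat M_{\widehat U})$ contains all closed inertia subgroups of $\widehat U'$.
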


\begin{proof}
	For every $x\in \widehat G$, Theorem~\ref{thm:profinite-inertia-rigidity} identifies $\Phi(\widehat I_x(\widehat U))$ as a closed inertia subgroup of $\widehat U'$. Therefore $\Phi(\widehat M_{\widehat U})$ is a closed normal subgroup of $\widehat U'$ containing all closed inertia subgroups of $\widehat U'$. By the definition of $\widehat M_{\widehat U'}$, we obtain $\widehat M_{\widehat U'} \subseteq \Phi(\widehat M_{\widehat U})$. The reverse inclusion follows applying Theorem~\ref{thm:profinite-inertia-rigidity} to $\Phi^{-1}$.
	Finally, since $\Phi$ restricts to an isomorphism $\widehat U\to \widehat U'$ sending $\widehat M_{\widehat U}$ to $\widehat M_{\widehat U'}$, it induces an isomorphism of the corresponding quotients.
\end{proof}

\begin{corollary}[Discrete version for finite-index subgroups]\label{cor:discrete-profinite-quotients}
	Let $U\le G$ be a subgroup of finite index and let $\widehat U$ be its closure in $\widehat G$. Under the hypotheses of Theorem~\ref{thm:profinite-ramification-rigidity}, writing $U'\le G'$ for the finite-index subgroup corresponding to $\widehat U'=\Phi(\widehat U)$ (under the standard correspondence between finite-index subgroups and open subgroups of the profinite completion), one obtains a canonical isomorphism
	\[ \widehat U/\overline{M_U}^{\,\widehat U}\ \cong\ \widehat U'/\overline{M_{U'}}^{\,\widehat U'}.\]
\end{corollary}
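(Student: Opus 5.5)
The plan is to combine Corollary~\ref{thm:profinite-ramification-rigidity} with Lemma~\ref{lem:profinite-agrees}, applied once on each side of $\Phi$.

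First, I will make the correspondence between $U$ and $U'$ explicit. Since $U$ has finite index in $G$, its closure $\widehat U$ is open in $\widehat G$ (as discussed at the start of Section~4, via Nikolov--Segal). Because $\Phi$ is a topological isomorphism, $\widehat U'=\Phi(\widehat U)$ is open in $\widehat G'$. Since $G'$ is finitely generated and residual finiteness is not needed here, the standard correspondence $V\mapsto \overline V$ is a bijection between finite-index subgroups of $G'$ and open subgroups of $\widehat G'$, with inverse $W\mapsto W\cap G'$. So $U':=\widehat U'\cap G'$ has finite index in $G'$, its closure is $\widehat U'$, and by the remark preceding Lemma~\ref{lem:closure-intersection}, $\widehat U'$ is identified with the profinite completion of $U'$.

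Second, Corollary~\ref{thm:profinite-ramification-rigidity} gives $\Phi(\widehat M_{\widehat U})=\widehat M_{\widehat U'}$, and it shows that $\Phi$ induces an isomorphism $\widehat U/\widehat M_{\widehat U}\cong \widehat U'/\widehat M_{\widehat U'}$. Here the closed inertia on the primed side is computed with the meridian $m'$.

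Third, I will apply Lemma~\ref{lem:profinite-agrees} twice. For $U\le G$ it gives $\widehat M_{\widehat U}=\overline{M_U}^{\,\widehat U}$. For $U'\le G'$ it gives $\widehat M_{\widehat U'}=\overline{M_{U'}}^{\,\widehat U'}$. The lemma is stated for an arbitrary knot, so it applies verbatim to $K'$ with meridian $m'$. Substituting both identities into the isomorphism from the second step yields the claimed canonical isomorphism. It is induced by $\Phi|_{\widehat U}$ and depends only on $\Phi$, since $\widehat M_{\widehat U}$ does not depend on the choice of $y$.

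The only point needing care is the first step: that $U'$ is genuinely a finite-index subgroup of $G'$ whose closure is exactly $\widehat U'$. This is what makes Lemma~\ref{lem:profinite-agrees} applicable on the primed side. I will justify it by noting two facts. An open subgroup $W$ of $\widehat G'$ has finite index, so $W\cap G'$ has finite index in $G'$. Moreover, $\overline{W\cap G'}=W$, because $G'$ is dense and $W$ is open, which is Lemma~\ref{lem:closure-intersection} with $A=G'$ read in $\widehat G'$. Everything else is a direct substitution.
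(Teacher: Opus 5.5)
Your proposal is correct and follows the paper's proof: Lemma~\ref{lem:profinite-agrees} applied on both sides, combined with Corollary~\ref{thm:profinite-ramification-rigidity}. Your first step spells out the open/finite-index correspondence that the paper simply takes as standard. One small point: the density fact $\overline{W\cap G'}=W$ for open $W$ follows from the argument in the proof of Lemma~\ref{lem:closure-intersection}, not from its statement. As literally stated, that lemma already presupposes that $\widehat U$ is the closure of $U$, so citing its statement here would be circular, whereas the direct density argument you give is correct.
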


\begin{proof}
	By Lemma~\ref{lem:profinite-agrees}, $\widehat M_{\widehat U}=\overline{M_U}^{\,\widehat U}$ and similarly on the $G'$ side. Now apply Theorem~\ref{thm:profinite-ramification-rigidity} to identify the quotients.
\end{proof}

\begin{remark}[Why the meridian-preserving hypothesis is the right one]
	The conclusion cannot hold for an \emph{arbitrary} profinite isomorphism $\widehat G\cong \widehat G'$, because the definition of meridional inertia singles out a specific procyclic subgroup (the closure of $\langle m\rangle$). Theorem~\ref{thm:profinite-ramification-rigidity} should be read as: \emph{once a profinite isomorphism identifies the appropriate inertia object, it automatically preserves all inertia and ramification data built from it.}
\end{remark}

\section{Profinite cohomology and unramified classes}

This section is devoted to the profinite analogue of the cohomological characterisation of unramified cohomology classes treated in Theorem \ref{thm:H1-discrete}.
For any profinite group $\mathcal H$, there is a natural identification of the first continuous cohomology $H^1_{\mathrm{cts}}(\mathcal H;\mathbb F_p)$ with the set of continuous homomorphisms $\mathrm{Hom}_{\mathrm{cts}}(\mathcal H,\mathbb F_p)$.

\begin{theorem}[Cohomological characterization: profinite case]\label{thm:H1-profinite}
	Let $\widehat U$ be the profinite completion of $U$ and let $\overline{M_U}^{\,\widehat U}$ be the closure of $M_U$ inside $\widehat U$ (the closed ramification subgroup). Then for any prime $p$, inflation induces an isomorphism
	\[ H^1_{\mathrm{cts}}\!\big(\widehat U/\overline{M_U}^{\,\widehat U};\mathbb F_p\big) \;\xrightarrow{\ \cong\ }\; \left\{ \alpha\in H^1_{\mathrm{cts}}(\widehat U;\mathbb F_p)\ :\ \alpha \text{ vanishes on every closed inertia subgroup } \overline{U\cap g\langle m\rangle g^{-1}}^{\,\widehat U} \right\}.\]
\end{theorem}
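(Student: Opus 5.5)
The plan is to mirror the proof of Theorem~\ref{thm:H1-discrete}, replacing homomorphisms by continuous ones and normal closures by closed normal closures. Write $N:=\overline{M_U}^{\,\widehat U}$ and $\widehat\pi:\widehat U\twoheadrightarrow \widehat U/N$ for the quotient map. $N$ is a closed normal subgroup, since the closure of a normal subgroup is normal. Hence $\widehat U/N$ is profinite and $\widehat\pi$ is a continuous open surjection. Identify both sides with groups of continuous homomorphisms to $\mathbb F_p$ (discrete topology), and consider the inflation map $\beta\mapsto\beta\circ\widehat\pi$.

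\emph{Step 1 (image is unramified).} For continuous $\beta$ on $\widehat U/N$, the map $\beta\circ\widehat\pi$ is continuous and vanishes on $N$. By Proposition~\ref{thm:closure-ramification}, each $\overline{U\cap g\langle m\rangle g^{-1}}^{\,\widehat U}$ lies in $N$, so $\beta\circ\widehat\pi$ vanishes on every closed inertia subgroup.

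\emph{Step 2 (unramified classes factor).} Let $\alpha:\widehat U\to\mathbb F_p$ be continuous and vanish on every closed inertia subgroup. Then $\ker\alpha$ is an open, hence closed, normal subgroup of $\widehat U$ containing all $\overline{U\cap g\langle m\rangle g^{-1}}^{\,\widehat U}$. By the characterization of $N$ in Proposition~\ref{thm:closure-ramification} as the \emph{smallest closed} normal subgroup containing these closures, we get $N\subseteq\ker\alpha$. A more elementary route, avoiding the proposition, also works: $\alpha$ vanishes on each $U\cap g\langle m\rangle g^{-1}$, so $M_U\subseteq\ker\alpha$ exactly as in Step~2 of Theorem~\ref{thm:H1-discrete}, and closedness of $\ker\alpha$ then gives $N\subseteq \ker\alpha$. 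Thus $\alpha=\beta\circ\widehat\pi$ for a unique homomorphism $\beta$ on $\widehat U/N$. Continuity of $\beta$ follows because $\beta^{-1}(0)=\widehat\pi(\ker\alpha)$ is open, $\widehat\pi$ being an open map; equivalently, by the universal property of quotient topologies.

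\emph{Step 3 (injectivity).} This is immediate from surjectivity of $\widehat\pi$. Linearity over $\mathbb F_p$ is clear, so the map is an isomorphism onto the stated subspace.

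The only point requiring care, and the expected main obstacle, is topological. We must make sure that $N$ is closed and normal, that the quotient is a profinite group whose continuous $H^1$ is computed by continuous homomorphisms, and that the factored map $\beta$ is continuous. The key fact used throughout is that kernels of continuous maps to the discrete group $\mathbb F_p$ are open, and hence closed. One may optionally remark, using Lemma~\ref{lem:profinite-agrees}, that the right-hand condition is equivalent to vanishing on all $\widehat I_x(\widehat U)$ for $x\in\widehat G$.
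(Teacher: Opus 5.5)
Your proposal is correct and is essentially the paper's proof. Inflated classes kill $\overline{M_U}^{\,\widehat U}$ and hence every closed inertia subgroup; conversely, an unramified continuous $\alpha$ has an open (so closed) normal kernel containing all closed inertia, hence containing $\overline{M_U}^{\,\widehat U}$ by Proposition~\ref{thm:closure-ramification}, and injectivity comes from surjectivity of $\widehat\pi$.

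Your additional checks are valid details the paper leaves implicit: continuity of the factored $\beta$ via openness of $\widehat\pi$, and the alternative route via $M_U\subseteq\ker\alpha$ plus closedness. The only small imprecision is your justification of normality. Normality of $N$ in $\widehat U$ follows from $M_U$ being normalized by the dense subgroup $U$ (the normalizer of a closed subgroup is closed), not merely from $M_U$ being normal in $U$.
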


\begin{remark}
	The preceding cohomological characterization is directly analogous to the role of inertia in local Galois theory. Let $K$ be a $p$-adic local field with absolute Galois group $G_K$ and inertia subgroup $I_K \trianglelefteq G_K$. Then $I_K$ plays the role of a distinguished inertia subgroup, and one has a short exact sequence
	\[1 \longrightarrow I_K \longrightarrow G_K \longrightarrow G_K/I_K \longrightarrow 1,\]
	where $G_K/I_K \cong \widehat{\mathbb{Z}}$ is the Galois group of the maximal unramified extension $K^{\mathrm{un}}/K$ (see, for example, \cite[Chap.~VII, \S 5]{NSW}). Accordingly, for any prime $p$ one has
	\[H^1_{\mathrm{cts}}(G_K/I_K;\mathbb{F}_p) \cong \left\{ \alpha \in H^1_{\mathrm{cts}}(G_K;\mathbb{F}_p) : \alpha|_{I_K} = 0 \right\},\]
	i.e.\ unramified cohomology classes are exactly those that vanish on inertia (this is the usual description of unramified classes via inflation–restriction;  \cite[Chap.~II, \S4 and Chap.~VII, \S2]{NSW}).
	
	In contrast with the knot-theoretic setting for a single finite-index subgroup, where each closed meridional inertia subgroup is procyclic, the inertia subgroup $I_K$ of a local Galois group -- and in particular its wild inertia -- is typically a large, non-finitely generated profinite group; see \cite[Chap.~VII, \S 5]{NSW} for the structure of $I_K$ and its higher ramification filtration.
	Nevertheless, the same formal mechanism applies: killing inertia yields the maximal unramified quotient and detects unramified cohomology classes.
\end{remark}

\begin{proof}
	This is the same argument as Theorem~\ref{thm:H1-discrete}, but in the category of profinite groups and continuous homomorphisms.
	
	Identify $H^1_{\mathrm{cts}}(\widehat U;\mathbb F_p)$ with $\mathrm{Hom}_{\mathrm{cts}}(\widehat U,\mathbb F_p)$. Let $\widehat\pi:\widehat U\twoheadrightarrow \widehat U/\overline{M_U}^{\,\widehat U}$.
	
	If $\beta$ is a continuous homomorphism out of the quotient, then $\beta\circ\widehat\pi$ kills $\overline{M_U}^{\,\widehat U}$, hence kills every closed inertia subgroup contained in it.
	
	Conversely, if $\alpha:\widehat U\to\mathbb F_p$ is continuous and kills every closed inertia subgroup, then it kills the closed normal subgroup generated by them; by Proposition~\ref{thm:closure-ramification}, that closed normal subgroup is exactly $\overline{M_U}^{\,\widehat U}$. Therefore $\overline{M_U}^{\,\widehat U}\subseteq \ker(\alpha)$, so $\alpha$ factors uniquely through the quotient. Injectivity is again immediate from surjectivity of $\widehat\pi$.
\end{proof}

\begin{remark}
The analogy with local Galois theory is formal rather than literal at the
level of the size of inertia subgroups. In the knot-theoretic setting considered here, for a fixed finite-index subgroup $U\leq G$, each closed meridional inertia subgroup \[ \overline{U\cap g\langle m\rangle g^{-1}}^{\,\widehat U} \] is a closed subgroup of the procyclic group
\[ \overline{g\langle m\rangle g^{-1}}^{\,\widehat G}, \] and hence is itself procyclic or trivial.

By contrast, in local Galois theory, the inertia subgroup may contain a very large wild part. If $k$ is a local field with residue characteristic $p$, inertia $T_k\trianglelefteq G_k$ contains the ramification group $V_k$, and $V_k$ is a free pro-$p$ group of countably infinite rank; see \cite[Chap.~VII, \S5]{NSW}. At the same time, \[ G_k/T_k\cong \widehat{\mathbb Z}\] is the Galois group of the maximal unramified extension of $k$. Thus the analogy used in this paper concerns the formal role of inertia: killing inertia produces an unramified quotient, and degree-one unramified cohomology classes are precisely those that vanish on inertia.\end{remark}

\end{document}